\theoremstyle{plain} 
\newtheorem{theorem}{Theorem} 
\newtheorem{lemma}{Lemma} %[theorem]{Lemma} 
\newtheorem{prop}{Proposition} %[theorem]{Proposition}
\newtheorem*{theorem*}{Theorem}
\newtheorem*{kk}{Khintchine-Kahane Inequality}
\newtheorem*{contraction}{Kahane's Contraction Principle}
\theoremstyle{definition} 
\newtheorem*{definition*}{Definition}
\theoremstyle{remark}
\newtheorem*{remark}{Remark} 
\newcommand{\R}{\mathbbm{R}}
\newcommand{\Z}{\mathbbm{Z}}
\newcommand{\C}{\mathbbm{C}}
\newcommand{\E}{\mathbbm{E}}
\newcommand{\K}{\mathbbm{K}}
\newcommand{\prob}{\mathbbm{P}}
\newcommand{\D}{\,\text{d}}
\title[Carleson's embedding and the maximal theorem]{On the relation of Carleson's embedding and the maximal theorem in the context of Banach space geometry}
\author[T. Hytönen and M. Kemppainen]{Tuomas Hytönen and Mikko Kemppainen}
\address{Department of Mathematics and Statistics, University of Helsinki,
Gustaf Hällströmin katu 2b, FI-00014 Helsinki, Finland}
\email{tuomas.hytonen@helsinki.fi, mikko.k.kemppainen@helsinki.fi}
\begin{document}

\subjclass[2010]{42B25 (Primary); 46E40 (Secondary)} %46E40 42B25

% 42B25   Maximal functions, Littlewood-Paley theory 
% 46E40   Spaces of vector- and operator-valued functions

\keywords{Vector-valued harmonic analysis, R-boundedness, type of a Banach space}

\maketitle

\begin{abstract}
Hyt\"onen, McIntosh and Portal (J. Funct. Anal., 2008) proved two vector-valued generalizations of the classical Carleson embedding theorem, both of them requiring the boundedness of a new vector-valued maximal operator, and the other one also the type $p$ property of the underlying Banach space as an assumption. We show that these conditions are also necessary for the respective embedding theorems, thereby obtaining new equivalences between analytic and geometric properties of Banach spaces.
\end{abstract}

%\tableofcontents

\section{Introduction}

Let $E_j$ denote the averaging operator with respect to the dyadic cubes of sidelength $2^{-j}$ in~$\R^n$. The classical Carleson embedding theorem, in its dyadic version, characterizes the sequences $(\theta_j)_{j\in\Z}$ of functions $\theta_j\in L^2_{\textup{loc}}(\R^n)$ for which the map $f\mapsto(E_j f\cdot\theta_j)_{j\in\Z}$ embeds $L^2(\R^n)$ boundedly into $L^2(\Z\times\R^n)=L^2(\R^n;\ell^2)$. The usual proofs show that this embedding theorem is a corollary of the (dyadic) maximal inequality in $L^2(\R^n)$. The present article shows, in a more general context and among other things, that the two theorems are actually equivalent in a precise sense to be described. But first we give some background to motivate our considerations.

In the treatment of an infinite-dimensional version of the famous Kato square root problem (related to the functional calculus of elliptic divergence form operators), Hyt\"onen, McIntosh and Portal~\cite{HMP} encountered the need of a Carleson embedding for functions  $f\in L^p(\R^n;E)$  (the Bochner $L^p$ space with values in the Banach space~$E$). The relevant variant for the mentioned application involved replacing the classical sequence space $\ell^2$ appearing in the scalar version by the space $\textup{Rad}(E)$ of almost unconditionally summable sequences in $E$, which, of course, is no surprise to experts in vector-valued Harmonic Analysis. Thus there was a need to obtain reasonable conditions for the boundedness of the embedding $f\mapsto(E_j f\cdot\theta_j)_{j\in\Z}$ from $L^p(\R^n;E)$ to $L^p(\R^n;\textup{Rad}(E))$. Note that, already for $E=\C$, this led to apparently new considerations involving embeddings of the type $L^p(\R^n)\to L^p(\R^n;\ell^2)$, which are different from (and more difficult than) the straightforward $L^p$ generalizations of the classical embedding to $L^p(\R^n)\to L^p(\R^n;\ell^p)$.

In order to carry out an argument somewhat reminiscent of the classical proof, the authors of~\cite{HMP} introduced a new maximal operator $M_R$ for vector-valued functions. They then deduced two versions of the vector-valued Carleson embedding theorem under the condition that the maximal inequality $\|M_R f\|_{L^p(\R^n)}\leq C\|f\|_{L^p(\R^n;E)}$ holds for $f\in L^p(\R^n;E)$. This condition is satisfied by many classical Banach spaces~$E$ such as all reflexive $L^q$ spaces (and even their noncommutative counterparts), but not for instance by $E=\ell^1$.  Thus this maximal inequality defines a nontrivial Banach space property, which was termed RMF (for Rademacher maximal function) in \cite{HMP} and further studied by Kemppainen~\cite{RMF}.

Concerning the two versions of the vector-valued embedding, recall that Carleson's classical theorem gives an exact characterization of the admissible sequences $(\theta_j)_{j\in\Z}$ in terms of the so-called Carleson condition. There is an analogous condition $\text{Car}^p$ for every $p\in(1,\infty)$, which is easily seen to be necessary for the embedding of $L^p(\R^n;E)$. Its sufficiency was established in \cite{HMP} under the assumption that $E$ has the RMF property and so-called \emph{type} $p$, a well-established notion from the Geometry of Banach Spaces. Without the type $p$ assumption, the embedding was only obtained under a stronger Carleson condition $\text{Car}^{p+\epsilon}$ with $\epsilon>0$.

While both the RMF and the type $p$ assumptions where somewhat \emph{ad hoc} at the time of writing~\cite{HMP}, being basically the assumptions needed to make the particular method of proof work, it is the purpose of this paper to show that both these conditions are actually necessary for the respective embedding theorems. On the one hand, this gives further justification for the relevance of RMF as a new class of Banach spaces. On the other hand, the necessity of type is already interesting for the scalar-valued case $E=\C$, as no Banach space can have type $p>2$. This limits the optimal embedding theorem with $\text{Car}^p$ (rather than $\text{Car}^{p+\epsilon}$) to the spaces $L^p(\R^n)$ with $p\in(1,2]$. For quite a while, the first-named author believed that one should be able to take $\epsilon=0$ for all $p\in(1,\infty)$, until Michael Lacey provided him with a counterexample when $p=4$ (personal communication, September 2009). It was soon clear that this could be extended to all $p>2$, and this eventually led to the abstract result in the context of Banach spaces as formulated in this paper.

The RMF property also played a role in an earlier version of the characterization of the boundedness of vector-valued singular integral operators with respect to nonhomogeneous measures by Hyt\"onen~\cite{HYTONENNONHOMTB}, although this assumption was eventually eliminated from the final version of that paper. A variant of the vector-valued Carleson embedding theorem is still used there, but the point is that  the RMF assumption can be dispensed with provided that the functions $\theta_j$ satisfy the additional condition that $\theta_j=E_j\theta_j$. Without such additional structure, however, the RMF is equivalent to the Carleson embedding, as we show here.

In companion with the results of Kemppainen~\cite{RMF}, we now know various analytic conditions equivalent to the vector-valued maximal inequality. It is still an open question, however, to describe it in terms some established notions from the Geometry of Banach Spaces. In particular, it would be interesting to know if the important UMD property is sufficient for RMF.

\section{Preliminaries}

All Banach spaces can be either real or complex unless otherwise stated and so we speak of scalars without 
specifying whether they are real or complex. The scalar field, either $\R$ or $\C$, is generically denoted by $\K$.

We write $a \lesssim b$ when there exists a constant $C$ such that $a \leq Cb$, with $C$ 
independent of the indicated variables in expressions $a$ and $b$. By $a\eqsim b$ we mean $b\lesssim a \lesssim b$. 
Isomorphism of Banach spaces is denoted by $\simeq$. Sets of vectors indexed by a subset of a larger index set are
always thought to have zero extension to the whole index set.

Let $(\varepsilon_j)_{j=1}^{\infty}$ be a sequence of \emph{Rademacher variables}, more precisely, a
sequence of independent random variables attaining values $-1$ and $1$ with an equal probability
$\prob (\varepsilon_j = -1) = \prob (\varepsilon_j = 1) = 1/2$. We write $\E$ for the corresponding expectation.
%By the independence we have
%$\E (\varepsilon_j \varepsilon_k) = (\E \varepsilon_j)(\E \varepsilon_k) = 0$, whenever $j\neq k$,
%while (trivially) $\E (\varepsilon_j \varepsilon_k) = 1$, if $j=k$.
%The equality of a randomized norm and a square sum of norms for vectors $x_1, \ldots , x_N$ in a Hilbert space
%is thus established by the following calculation:
%\begin{equation*}
%  \E \Big\| \sum_{j=1}^N \varepsilon_j x_j \Big\|^2 = \E \Big\langle \sum_{j=1}^N \varepsilon_j x_j ,
%  \sum_{k=1}^N \varepsilon_k x_k \Big\rangle = \sum_{j,k=1}^N \E (\varepsilon_j \varepsilon_k)
%  \langle x_j, x_k \rangle = \sum_{j=1}^N \| x_j \|^2 .
%\end{equation*}

The following technique of \emph{randomization} will be used at times in order to handle randomized norms.
If $(\varepsilon_j)_{j=1}^N$ and $(\varepsilon'_j)_{j=1}^N$ are independent sequences of Rademacher variables,
then for any vectors $x_1, \ldots , x_N$ in a Banach space, the sequences $(\varepsilon_j x_j)_{j=1}^N$ and
$(\varepsilon'_j \varepsilon_j x_j)_{j=1}^N$ are identically distributed. In practise this is often applied in the
following way: if $\{ 1, \ldots , N \}$ is decomposed into disjoint sets $J_1, \ldots , J_M$, then
\begin{equation*}
  \E \Big\| \sum_{j=1}^N \varepsilon_j x_j \Big\| ^p 
  = \E \E ' \Big\| \sum_{k=1}^M \varepsilon'_k \sum_{j\in J_k} \varepsilon_j x_j \Big\| ^p ,
\end{equation*}
where $\E '$ denotes the expectation for $\varepsilon'_j$'s and $1\leq p < \infty$.

The following two standard results will be used frequently (see Kahane \cite{KAHANE} for proofs):

\begin{contraction}
  \label{contraction}
  Let $1\leq p < \infty$ and suppose that $x_1, \ldots , x_N$ are vectors in a Banach space. Then
  \begin{equation*}
    \E \Big\| \sum_{j=1}^N \varepsilon_j \lambda_j x_j \Big\| ^p \leq \Big( 2\max_{1\leq j \leq N} | \lambda_j | \Big) ^p
    \E \Big\| \sum_{j=1}^N \varepsilon_j x_j \Big\| ^p 
  \end{equation*}
  for any scalars $\lambda_1,\ldots , \lambda_N$.
  If the scalars $\lambda_j$ are real, the constant $2$ may be omitted.
\end{contraction}

%The following standard result guarantees the comparability of different
%randomized norms (see Kahane \cite{KAHANE} for a proof).

\begin{kk}
\label{kkineq}
  For any $1 \leq p,q < \infty$, there exists a constant $K_{p,q}$ such that
  \begin{equation*}
    \Big( \E \Big\| \sum_{j=1}^N \varepsilon_j x_j \Big\|^p \Big)^{1/p}
    \leq K_{p,q} \Big( \E \Big\| \sum_{j=1}^N \varepsilon_j x_j \Big\|^q \Big)^{1/q},
  \end{equation*}
  whenever $x_1, \ldots , x_N$ are vectors in a Banach space.
\end{kk}

We recall the following fact concerning randomized series (see e.g. Diestel, Jarchow and Tonge \cite{DJT}, Theorem 12.3):
for a sequence $(x_j)_{j=1}^{\infty}$ of vectors in a Banach space $E$, the series
$\sum_{j=1}^{\infty} \varepsilon_j x_j$
converges almost surely if and only if it converges in $L^p$ for one (or equivalently, for each) $p\in [1,\infty )$.
Such sequences are called \emph{almost unconditionally summable}. The space of all these sequences in $E$
is denoted by $\text{Rad}(E)$ and when equipped with any of the equivalent norms
\begin{equation*}
  \Big\| (x_j)_{j=1}^{\infty} \Big\|_{\text{Rad}_p(E)} 
  = \Big( \E \Big\| \sum_{j=1}^{\infty} \varepsilon_j x_j \Big\| ^p \Big) ^{1/p}, \quad 1 \leq p < \infty ,
\end{equation*}
it becomes a Banach space.

\begin{remark}\
\begin{enumerate}
\item Although the sequences $(x_j)_{j=1}^{\infty}$ in $\text{Rad}(E)$ are not in general 
unconditionally summable, the sequences $(\varepsilon_j x_j)_{j=1}^{\infty}$ of random variables are unconditionally summable in 
the $L^p$-norm for any  $p\in [1,\infty )$. Thus the space $\text{Rad}(E)$ remains the same for different orderings of the index set.
\item For any Hilbert space $H$, there holds $\text{Rad} (H) = \ell^2(H)$, which is easy to check using the $\textup{Rad}_2$ norm and the orthogonality of the signs $\varepsilon_j$.
\item Kahane's Contraction Principle will often be applied in order to bound a finite sum by an infinite sum in the randomized norms.
%In particular, observe that
%\begin{equation*}
%  \E \Big\| \sum_{j=1}^N \varepsilon_j x_j \Big\| ^p \leq
\end{enumerate}
%\end{equation*}
\end{remark}

In order to deduce the membership in $\textup{Rad}(E)$ of an infinite sequence from uniform estimates on its subsequences, we will need the following classical result of Kwapie\'n~\cite{KWAPIENC0}: 

\begin{prop}\label{prop:Kwapien}
If a Banach space $E$ does not contain an isomorphic copy of $c_0$ as a subspace, then for all sequences $(x_j)_{j=1}^{\infty}$ in $E$ there holds
\begin{equation*}
  \sup_{N\in\Z_+}\big\|(x_j)_{j=1}^N\|_{\textup{Rad}(E)}<\infty\qquad\Rightarrow\qquad
  (x_j)_{j=1}^{\infty}\in\textup{Rad}(E).
\end{equation*}
\end{prop}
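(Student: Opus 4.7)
The plan is to argue by contraposition: supposing the conclusion fails, we build a copy of $c_0$ inside $E$. By Kahane-Khintchine it suffices to work with $p=2$, say, so the hypothesis $\sup_N \|(x_j)_{j=1}^N\|_{\textup{Rad}(E)}<\infty$ becomes uniform boundedness in $L^2(\Omega;E)$ of the partial sums $S_N=\sum_{j=1}^N\varepsilon_j x_j$, and the conclusion amounts to showing that $(S_N)$ is Cauchy in $L^2(\Omega;E)$.

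Suppose for contradiction that $(S_N)$ is not Cauchy. Then one can extract disjoint intervals $I_1<I_2<\cdots$ of indices and some $\delta>0$ such that, writing $T_k:=\sum_{j\in I_k}\varepsilon_j x_j$, one has $\|T_k\|_{L^2(\Omega;E)}\geq\delta$ for all $k$. The $T_k$ are independent symmetric $E$-valued random variables. Kahane's Contraction Principle applied to the indicators $\lambda_j=\mathbbm{1}_{I_1\cup\cdots\cup I_K}(j)$ transfers the uniform bound on $\|S_N\|_{L^2(\Omega;E)}$ to the blocked partial sums $\sum_{k=1}^K T_k$. A further randomization with independent signs $(\varepsilon'_k)$ shows the same uniform bound for $\sum_{k=1}^K\varepsilon'_k T_k$, because $(\varepsilon'_k\varepsilon_j)_{j\in I_k}$ is distributed like $(\varepsilon_j)_{j\in I_k}$.

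Next, I would pass to a deterministic realization in order to produce vectors $v_k=T_k(\omega)\in E$ satisfying, simultaneously, a uniform upper bound on all signed partial sums $\sup_K\|\sum_{k\leq K}\eta_k v_k\|<\infty$ for every choice $\eta_k\in\{\pm 1\}$, and a lower bound $\|v_k\|\geq\delta'>0$ along a subsequence of $k$'s. The upper bound on signed partial sums is extracted from the uniform $L^2$ bound on $\sum_k\varepsilon'_k T_k$ via Fubini and Chebyshev (fixing first the inner $\varepsilon'$-randomness), together with an elementary diagonalization across the finite sign patterns; the lower bound on $\|v_k\|$ follows from $\E\|T_k\|^2\geq\delta^2$ by the Paley-Zygmund/Chebyshev inequality, giving $\prob(\|T_k\|\geq\delta/2)$ bounded away from zero so that Borel-Cantelli yields infinitely many $k$ with $\|v_k\|\geq\delta/2$ on a set of positive measure. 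A standard diagonal argument combines both properties on a common $\omega$.

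The resulting deterministic sequence $(v_k)\subset E$ has uniformly bounded signed partial sums and does not converge to zero; in particular $\sum_k v_k$ is weakly unconditionally Cauchy. The Bessaga-Pe\l{}czy\'nski selection principle then extracts a subsequence equivalent to the unit vector basis of $c_0$, contradicting the hypothesis $c_0\not\hookrightarrow E$. The delicate step I expect to be the main obstacle is the realization-selection: guaranteeing one single $\omega$ that simultaneously witnesses the uniform upper bounds on \emph{all} signed partial sums and the lower bounds on \emph{infinitely many} $\|v_k(\omega)\|$. This is exactly where the It\^o-Nisio theorem (or Kwapie\'n's original argument) does the real work, and in a fully written proof one would most likely just invoke the cited paper~\cite{KWAPIENC0}.
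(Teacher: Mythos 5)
First, note that the paper does not prove this proposition at all: it is quoted as a classical theorem of Kwapie\'n with the reference \cite{KWAPIENC0}, so there is no internal argument to compare yours against; what you are attempting is in effect a reproof of Kwapie\'n's theorem. Your skeleton is indeed the classical one: block a non-Cauchy sequence of partial sums into independent symmetric blocks $T_k$ with $\|T_k\|_{L^2(\Omega;E)}\geq\delta$, transfer the uniform bound to the blocked and re-randomized sums $\sum_{k\leq K}\varepsilon_k'T_k$ by the contraction principle and randomization, produce a deterministic sequence with norms bounded below and uniformly bounded signed partial sums, and conclude via the Bessaga--Pe\l{}czy\'nski $c_0$-theorem.

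However, the step you yourself flag as delicate is a genuine gap, and the tools you name (Fubini, Chebyshev, Borel--Cantelli, diagonalization over finite sign patterns) cannot close it. After fixing a single $\omega$, the uniform $L^2$ bound on $\sum_k\varepsilon_k'T_k$ only yields that the set of sign sequences $\eta$ with $\sup_K\big\|\sum_{k\leq K}\eta_k T_k(\omega)\big\|\leq M$ has large measure; it does not give boundedness for \emph{every} $\eta$, and a diagonalization over the $2^K$ sign patterns of length $K$ cannot make the bound uniform in $K$, since the exceptional sets proliferate with $K$. The implication you would need is in fact false in general: already for scalars, $a_k=1/k$ has almost surely convergent (hence bounded) random signed partial sums, while $\sup_K\sum_{k\leq K}a_k=\infty$; so ``bounded for almost every sign choice'' does not upgrade to the weakly unconditionally Cauchy statement ``bounded for every sign choice.'' What the fixed-$\omega$ data actually yields is essentially only $\sum_k x^*(T_k(\omega))^2<\infty$ for every $x^*$, i.e.\ weak square-summability, which together with $\|T_k(\omega)\|\geq\delta/2$ is not enough for Bessaga--Pe\l{}czy\'nski (the unit vector basis of $\ell^2$ has both properties and $c_0\not\subset\ell^2$). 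Kwapie\'n's actual argument avoids this by exploiting the independence of the blocks: the witnessing vectors $u_i$ are selected inductively as values of $T_{k_i}$ at \emph{different} sample points (a gliding-hump selection in the product space, with the symmetry of each block handling the signs), not as the values of all blocks at one common $\omega$. As written your proof therefore does not go through, and since you ultimately propose to invoke \cite{KWAPIENC0} anyway, the citation is doing all the real work --- which is exactly what the paper itself does.
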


%The concepts of type and cotype of a Banach space
The concept of type of a Banach space is intended to measure how far the randomized norms are from square sums of norms.
As we will prove, it also governs the form of Carleson's embedding theorem which one can obtain in a given Banach space.

\begin{definition*}
  A Banach space is said to have 
%  \begin{enumerate}
  %\item
  \emph{type} $p \in [1,2]$ if there exists a constant $C$ such that
  \begin{equation*}
    \Big( \E \Big\| \sum_{j=1}^N \varepsilon_j x_j \Big\|^2 \Big)^{1/2}
    \leq C \Big( \sum_{j=1}^N \| x_j \|^p \Big)^{1/p}
  \end{equation*}
  for any vectors $x_1, \ldots , x_N$, regardless of $N$. 
%
%  \item \emph{cotype} $q \in [2, \infty )$ if there exists a constant $C$ such that
%  \begin{equation*}
%    \Big( \sum_{j=1}^N \| x_j \|^q \Big)^{1/q}
%    \leq C \Big( \E \Big\| \sum_{j=1}^N \varepsilon_j x_j \Big\|^2 \Big)^{1/2}
%  \end{equation*} 
%  for any vectors $x_1, \ldots , x_N$, regardless of $N$. In the case $q=\infty$ the left hand side in the
%  above inequality is replaced by $\max_{1\leq j \leq N} \| x_j \|$.
%  \end{enumerate}
\end{definition*}

\begin{remark}\ %A few observations can be made.
\begin{enumerate}
\item
Every Banach space has type $1$; hence we say that
a Banach space has \emph{nontrivial type}
if it has type $p$ for some $p > 1$.

\item
It follows from standard inequalities of $\ell^p$-norms that
if a space has type $p$ then it also has type $\tilde{p}$ when
$1\leq \tilde{p} \leq p$.

\item
One can show that
$L^p$-spaces have type $\min \{ p,2 \}$ when $1\leq p < \infty$.
Sequence spaces $\ell^1$ and $c_0$, on the other hand, are typical examples of spaces with only trivial type.

\item
Hilbert spaces have type $2$ with constant $C=1$ and equality of the randomized and quadratic norms.
\end{enumerate}
\end{remark}

%\begin{remark} A few observations can be made.
%\begin{enumerate}
%\item
%As every Banach space has both type $1$ and cotype $\infty$ we say that
%a Banach space has \emph{non-trivial type} (respectively \emph{finite cotype})
%if it has type $p$ for some $p > 1$ (respectively cotype $q$ for some $q < \infty$).

%\item
%It follows from standard inequalities of $l^p$-norms that
%if a space has type $p$ and cotype $q$ then it also has type $\tilde{p}$ and cotype $\tilde{q}$ when
%$1\leq \tilde{p} \leq p$ and $q\leq \tilde{q} \leq \infty$.

%\item
%One can show that
%$L^p$-spaces have type $\min \{ p,2 \}$ and cotype $\max \{ p,2 \}$ when $1\leq p < \infty$.
%Sequence spaces $l^1$ and $l^{\infty}$ are on the other hand typical examples of spaces with only trivial type.

%\item
%The equality of randomized norms and square sums of norms in Hilbert spaces means of course that
%they have both type $2$ and cotype $2$.
%A remarkable result of Kwapie\'n's (see the original paper \cite{KWAPIEN}, 
%or the new proof by Pisier \cite{PISIER}) 
%is that a Banach space with both type $2$ and cotype $2$
%is necessarily isomorphic to a Hilbert space.
%\end{enumerate}
%\end{remark}

In many questions of vector-valued Harmonic Analysis 
the uniform bound of a family of operators has to be replaced by its R-bound, first formally defined
by Berkson and Gillespie \cite{BERKSONGILLESPIE}.
The usefulness of this notion became widely recognized after its role in the seminal work of Weis~\cite{WEIS},
and it also lies behind the definition of the Rademacher maximal function, which we discuss in the following section.

\begin{definition*}
  A family $\mathcal{T} \subset \mathcal{L}(F,E)$ of linear operators from a Banach
  space $F$ to a Banach space $E$ is said to be \emph{R-bounded} if there
  exists a constant $C$ such that for any $T_1,\ldots , T_N \in \mathcal{T}$ and any 
  $x_1,\ldots , x_N \in F$, regardless of $N$, we have
  \begin{equation*}
    \E  \Big\| \sum_{j=1}^N \varepsilon_j T_j x_j \Big\|^p 
    \leq C^p \E \Big\| \sum_{j=1}^N \varepsilon_j x_j \Big\|^p ,
  \end{equation*}
  for some $p\in [1, \infty )$. 
  The smallest such constant is denoted by $\mathcal{R}_p (\mathcal{T})$. We denote $\mathcal{R}_2$ by $\mathcal{R}$
  for short later on.
\end{definition*}

Basic properties of R-bounds can be found for instance in Clément et al. \cite{CLEMENT}. We wish only to remark that
by the Khintchine-Kahane inequality, the R-boundedness of a family does not depend on $p$, and the constants
$\mathcal{R}_p (\mathcal{T})$ are comparable. R-bounds are (usually strictly) stronger than uniform norm bounds. They coincide with uniform bounds if $E$ and $F$ are Hilbert spaces.

%We will then compare R-boundedness and uniform boundedness.
%Any R-bounded set is easily seen to be uniformly bounded:
%\begin{equation*}
%  \sup_{T\in\mathcal{T}} \| T \|_{\mathcal{L}(F,E)} \leq \mathcal{R}_p (\mathcal{T})
%\end{equation*}
%for any $1\leq p < \infty$.

%In Hilbert spaces also the converse holds. More generally, the following result is proven by
%Arendt and Bu in \cite{AB} (while
%the authors credit the proof to Pisier):

%\begin{prop}
%\label{Rboundsandtypes}
%  Suppose that $F$ and $E$ are Banach spaces. The following conditions are equivalent:
%  \begin{enumerate}
%    \item $F$ has cotype $2$ and $E$ has type $2$.
%    \item Every uniformly bounded family of linear operators in $\mathcal{L}(F,E)$ is R-bounded.
%  \end{enumerate}
%\end{prop}

%\begin{remark}
%  It is clear that
%  if $F$ and $E$ have cotype $2$ and type $2$, respectively, and
%  if $\mathcal{X}\subset \mathcal{L}(F,E)$ is a Banach space whose norm dominates the operator norm, then
%  all uniformly ($\mathcal{X}$-) bounded sets are also R-bounded.
%\end{remark}

\section{The Rademacher maximal function}

Suppose from now on that $F$ and $E$ are Banach spaces and that $\mathcal{X}\subset \mathcal{L}(F,E)$ is a Banach space
whose norm dominates the operator norm. Moreover, we require that $\mathcal{X}$ contains all the elementary tensors
\begin{equation*}
   f^*\otimes e:y\in F\mapsto f^*(y)e\in E,\qquad e\in E,\quad f^*\in F
\end{equation*}
and that $\|f^*\otimes e\|_{\mathcal{X}}=\|f^*\|_{F^*}\|e\|_E$. Fixing $f^*\in F^*$ or $e\in E$ of unit norm, this implies in particular that $\mathcal{X}$ contains an isometric copy of both $E$ and $F^*$.

Let $(\Omega , \mathcal{F} , \mu )$ be a $\sigma$-finite measure space and
denote the corresponding Lebesgue-Bochner
space of $\mathcal{F}$-measurable $\mathcal{X}$-valued functions by 
$L^p(\mathcal{F} ; \mathcal{X})$ (or $L^p(\mathcal{X})$), $1\leq p \leq \infty$.
The space of %``locally'' integrable functions, or more precisely, 
strongly measurable functions $f$ for which
$1_A f$ is integrable for every set $A\in\mathcal{F}$ with finite measure, is denoted by 
$L_{\textup{fin}}^1(\mathcal{F} ; \mathcal{X})$.

If $\mathcal{G}$ is a sub-$\sigma$-algebra of $\mathcal{F}$ such that $(\Omega, \mathcal{G}, \mu )$ is
$\sigma$-finite, there exists for every function $f\in L_{\textup{fin}}^1(\mathcal{F} ; \mathcal{X})$
a \emph{conditional expectation} 
$\E (f | \mathcal{G}) \in L_{\textup{fin}}^1 (\mathcal{G} ; \mathcal{X})$ with respect to $\mathcal{G}$
which is the (almost everywhere) unique strongly $\mathcal{G}$-measurable function satisfying
\begin{equation*}
  \int_A \E (f | \mathcal{G}) \D\mu = \int_A f \D\mu
\end{equation*}
for every $A\in\mathcal{G}$ with finite measure. The operator $\E ( \cdot | \mathcal{G} )$ is a contractive 
projection from $L^p(\mathcal{F} ; \mathcal{X})$ onto $L^p(\mathcal{G} ; \mathcal{X})$ for any $p\in [1,\infty ]$.
This follows immediately, if the vector-valued conditional expectation is constructed as the tensor extension of the
scalar-valued conditional expectation, which is a positive operator (see Stein
\cite{STEIN} for the scalar-valued case).

Suppose then that $(\mathcal{F}_j)_{j\in\Z}$ is a \emph{filtration}, that is, 
an increasing sequence of sub-$\sigma$-algebras of $\mathcal{F}$ such that each $(\Omega, \mathcal{F}_j, \mu )$ is
$\sigma$-finite. For a function $f\in L_{\textup{fin}}^1(\mathcal{F} ; \mathcal{X})$, 
we denote the conditional expectations with respect to this filtration by
\begin{equation*}
  E_j f := \E (f | \mathcal{F}_j) , \quad j\in\Z .
\end{equation*}
The standard maximal function (with respect to $(\mathcal{F}_j)_{j\in\Z}$) is given by
\begin{equation*}
  Mf(\xi ) = \sup_{j\in\Z} \| E_jf(\xi ) \| , \quad \xi\in\Omega .
\end{equation*}
The operator $f \mapsto Mf$ is known to be bounded from 
$L^p(\mathcal{X})$ to $L^p$ whenever $1<p\leq\infty$, regardless of $\mathcal{X}$.
The following variant was originally defined by Hytönen, McIntosh and Portal \cite{HMP} and later studied
in more detail by Kemppainen \cite{RMF}.

\begin{definition*}
The \emph{Rademacher maximal function} of a function $f\in L_{\textup{fin}}^1(\mathcal{F} ; \mathcal{X})$ is defined by
\begin{equation*}
  M_Rf(\xi ) = \mathcal{R} \Big( E_jf(\xi ) : j\in\Z \Big) , \quad \xi\in\Omega .
\end{equation*}
\end{definition*}

\begin{remark}
%The $\mu$-measurability of $M_Rf$ follows from the estimate
%\begin{equation*}
%  \mathcal{R} \Big( T_j : j\in\Z \Big) \leq \sum_{j\in\Z} \| T_j \|_{\mathcal{L}(F,E)}
%\end{equation*}
%and the observation that a truncated version of $M_Rf$ can be viewed as 
%a composition of a strongly $\mu$-measurable function
%\begin{equation*}
%  \Omega \to \mathcal{X}^{2N+1} : \quad \xi \mapsto (E_jf(\xi ))_{j=-N}^N
%\end{equation*}
%and a continuous function (we assume that the norm of $\mathcal{X}$ dominates the operator norm)
%\begin{equation*}
%  \mathcal{X}^{2N+1} \to \R : \quad (T_j)_{j=-N}^N \mapsto \mathcal{R} \Big( T_j : |j| \leq N \Big) .
%\end{equation*} 

By the properties of R-bounds we obtain the pointwise relation
$Mf \leq M_Rf$. If $F$ and $E$ are Hilbert spaces, then $M_Rf=Mf$.
%If $F$ has cotype $2$ and $E$ has type $2$ 
%it follows from Proposition \ref{Rboundsandtypes} that $M_Rf \lesssim Mf$. 
%This is the case in particular when $F = L^q$ for $1 \leq q \leq 2$ and $E = L^p$ for $2 \leq p < \infty$
%over some measure spaces. 
\end{remark}

In \cite{HMP}, Hytönen, McIntosh and Portal used the identification
$\mathcal{L}(\K , E) \simeq E$ and studied the Rademacher maximal function in the Euclidean case, where $\Omega = \R^n$ 
is equipped with Lebesgue measure and the filtration that is generated by dyadic cubes
$\mathcal{D}_j = \{ 2^{-j} ([0,1)^n + m) : m\in\Z^n \}$, $j\in\Z$. 
They showed that the $L^p$-boundedness of $f\mapsto M_Rf$ for one $p\in (1,\infty )$ implies boundedness
of a linearized version of $M_R$ both from $H^1$ to $L^1$ and from $L^{\infty}$ to $\text{BMO}$
and hence allows to interpolate in order to acquire boundedness between
Lorentz spaces $L^{p,s}$ for all $1 < p < \infty$, $1\leq s \leq \infty$ (see Hunt \cite{HUNTLORENTZ}
for details on interpolation between Lorentz spaces). 
They also provided an example of a space, namely $\ell^1$, for which
the Rademacher maximal operator is not bounded.

Kemppainen \cite{RMF} gave the definition in the above generality and showed that the boundedness of $M_R$ is independent of the filtration and the
underlying measure space in the following sense: the boundedness with respect to the filtration of dyadic intervals
on $[0,1)$ guarantees boundedness with respect to any filtration on any $\sigma$-finite measure space. This motivates the definition:

\begin{definition*}
A Banach space $\mathcal{X}\subset \mathcal{L}(F,E)$ is said to have RMF
if the Rademacher maximal operator with respect to the filtration of dyadic intervals
on $[0,1)$ is bounded from $L^p(\mathcal{X})$ to $L^p$ for one (or equivalently, for each) $p\in (1,\infty )$.
\end{definition*}

\begin{remark}\
\begin{enumerate}
\item The RMF-property is inherited by closed subspaces. In particular, from the assumption that $\mathcal{X}\subset\mathcal{L}(F,E)$ contain the elementary tensors $e\otimes f^*$, it follows that if $\mathcal{X}$ has RMF, then so do $E\simeq\mathcal{L}(\K,E)$ and $F^*=\mathcal{L}(F,\K)$.

\item Based on the fact that $\ell^1$ does not have RMF, it was shown by Kemppainen \cite{RMF} that if $\mathcal{X}$, and hence $E$, has RMF, then $E$  has some nontrivial type $p>1$. (The result was formulated for $\mathcal{X}=\mathcal{L}(F,E)$, but only used the fact that $E$ is isomorphic to a subspace of $\mathcal{X}$.) Such a space cannot contain an isomorphic copy of $c_0$, and hence Proposition~\ref{prop:Kwapien} is applicable in this situation.

\item When speaking of the RMF-property of $\mathcal{X}$, we always understand that the indentification of $\mathcal{X}$ as a subspace of an operator space $\mathcal{L}(F,E)$ has been fixed. If a space $E$ has no obvious operator structure, we always understand that the identification $E\simeq\mathcal{L}(\K,E)$ is used. The RMF-property does depend on the chosen identification! In particular, if $H$ and $K$ are infinite-dimensional Hilbert spaces, then $\mathcal{X}=\mathcal{L}(H,K)$ has RMF when viewed as $\mathcal{L}(H,K)$ (trivially, since then $M_Rf\leq Mf$), but it does not have RMF when viewed as $\mathcal{L}(\K,\mathcal{X})$ (since this would require that $\mathcal{X}$ have nontrivial type, and it does not). 
\end{enumerate}
\end{remark}

%if $\mathcal{L}(F,E)$ has
%RMF, then $E$ has it, for $E$ can be embedded isometrically in $\mathcal{L}(F,E)$ by fixing a functional
%$x^*\in F^*$ with unit norm and mapping vectors $y$ in $E$ to operators $y\otimes x^*$ in $\mathcal{L}(F,E)$ given by
%$(y\otimes x^*)x = \langle x , x^* \rangle y$.

\section{Carleson's embedding theorem}

%Suppose that $F$ and $E$ and Banach spaces and that $\mathcal{X}\subset\mathcal{L}(F,E)$ is a Banach space
%whose norm dominates the operator norm. 
%Let $(\mathcal{F}_j)_{j\in\Z}$ be a filtration on a $\sigma$-finite measure space $(\Omega, \mathcal{F}, \mu )$.
%A finitely non-zero family $\theta = (\theta_j )_{j\in\Z}$ of strongly $\mu$-measurable 
%$F$-valued functions is called a \emph{$p$-Carleson family} for $p\in [1,\infty )$
%if there exists a constant $C$ such that 
%for any integer $m$ and all sets $A\in\mathcal{F}_m$ with finite measure we have
%\begin{equation*}
%  \int_A \E \Big\| \sum_{j\geq m} \varepsilon_j \theta_j (\xi ) \Big\|^p \D\mu (\xi ) \leq C^p \mu (A) .
%\end{equation*}
%The smallest such constant is called the \emph{$p$-Carleson constant} $\| \theta \|_{\text{Car}^p}$ of $\theta$.
%Observe that $\| \theta \|_{\text{Car}^p} \leq \| \theta \|_{\text{Car}^q}$ whenever $p\leq q$.

%Suppose that $F$ and $E$ and Banach spaces and that $\mathcal{X}\subset\mathcal{L}(F,E)$ is a Banach space
%whose norm dominates the operator norm. 

Recall that $F$ and $E$ are Banach spaces and $\mathcal{X}\subset\mathcal{L}(F,E)$ has the properties assumed in the beginning of the previous section.
Let $(\mathcal{F}_j)_{j\in\Z}$ be a filtration on a $\sigma$-finite measure space $(\Omega, \mathcal{F}, \mu )$.

\begin{definition*}
A family $\theta = (\theta_j )_{j\in\Z}$ of strongly $\mu$-measurable 
$F$-valued functions is called a \emph{$p$-Carleson family} for $p\in [1,\infty )$ if 
\begin{enumerate}
\item $(\theta_j(\xi))_{j\geq m}$ is in $\text{Rad}(F)$ for all $m\in\Z$ and $\mu$-almost every $\xi\in\Omega$,
\item there exists a constant $C$ such that for any integer $m$ and all sets $A\in\mathcal{F}_m$ we have
\begin{equation*}
  \int_A \E \Big\| \sum_{j\geq m} \varepsilon_j \theta_j (\xi ) \Big\|^p \D\mu (\xi ) \leq C^p \mu (A) .
\end{equation*}
\end{enumerate}
The smallest such constant is called the \emph{$p$-Carleson constant} $\| \theta \|_{\text{Car}^p}$ of $\theta$.
\end{definition*}

Observe that $\| \theta \|_{\text{Car}^p} \leq \| \theta \|_{\text{Car}^q}$ whenever $p\leq q$. This definition was introduced by Hyt\"onen, McIntosh and Portal~\cite{HMP} in the case of scalar-valued functions and the dyadic filtration of $\R^n$. The above generalization appears in~\cite{HYTONENNONHOMTB}, where it was used in the context of singular integrals with respect to nonhomogeneous measures.
%The $p$-Carleson families equipped with $\| \cdot \|_{\text{Car}^p}$ constitute a Banach space.

Let $1 < p < \infty$ and $1 \leq s < \infty$.
The norm of the Lorentz space $L^{p,s}(\mathcal{X})$ on $(\Omega , \mu )$ is given by
\begin{equation*}
  \| f \|_{L^{p,s}(\mathcal{X})} = 
  \Big( \int_0^{\infty} \Big( \lambda 
  \mu ( \{ \xi\in\Omega : \| f(\xi ) \| > \lambda \} )^{1/p} ) \Big)^s \frac{\D\lambda}{\lambda} \Big)^{1/s} .
\end{equation*}
Recall that $\| f \|_{L^{p,s_2}} \leq \| f \|_{L^{p,s_1}}$ for $s_1\leq s_2$ and
$\| f \|_{L^{p,p}} \eqsim \| f \|_{L^p}$.

The following main lemma contains the heart of the deduction of Carleson's embedding theorem from the maximal inequality. It is based on the same stopping time technique as its original special case in Hyt\"onen, McIntosh and Portal~\cite{HMP}.

\begin{lemma}
\label{mainlemma}
  Let $1 < p < \infty$, suppose that $E$ has type $r\in [1,2]$ and write $s = \min \{ p,r \}$.
  For any $p$-Carleson family $\theta = (\theta_j )_{j\in\Z}$ we have
  \begin{equation*}
    \Big( \int_{\Omega} \sup_{N\in\Z}\E \Big\| \sum_{j\geq N} \varepsilon_j E_jf(\xi ) \theta_j(\xi ) \Big\|^p \D\mu (\xi ) \Big)^{1/p}
    \lesssim \| \theta \|_{\textup{Car}^p} \| M_R f \|_{L^{p,s}} ,
  \end{equation*}
whenever $f\in L^{p,s}(\mathcal{X})$ is such that the right side is finite. If $E$ does not contain an isomorphic copy of $c_0$, then the series on the left converges as $N\to-\infty$ for a.e.~$\xi$, and we also have
\begin{equation*}
    \Big( \int_{\Omega} \E \Big\| \sum_{j\in\Z} \varepsilon_j E_jf(\xi ) \theta_j(\xi ) \Big\|^p \D\mu (\xi ) \Big)^{1/p}
    \lesssim \| \theta \|_{\textup{Car}^p} \| M_R f \|_{L^{p,s}}.
    \end{equation*}  
\end{lemma}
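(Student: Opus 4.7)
I would follow the stopping-time argument of Hyt\"onen--McIntosh--Portal, extended to handle the type hypothesis and the Lorentz-space bound. Define, for each $k\in\Z$, the partial Rademacher maximal function $M_R^jf(\xi):=\mathcal{R}\{E_if(\xi):i\leq j\}$ and the stopping time $\tau_k(\xi):=\inf\{j\in\Z:M_R^jf(\xi)>2^k\}$ (with $\inf\emptyset=+\infty$). Then $\tau_{k-1}\leq\tau_k$, each $\tau_k$ is $(\mathcal{F}_j)$-adapted, and $\{\tau_k<\infty\}=\{M_Rf>2^k\}$. For fixed $N$ and $\xi$, the half-line $\{j\geq N\}$ decomposes into the disjoint intervals $J_k^N(\xi)=[\max(N,\tau_{k-1}(\xi)),\tau_k(\xi))\cap\Z$, on each of which $\{E_jf(\xi):j\in J_k^N(\xi)\}$ is R-bounded by $2^k$.

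\textbf{Pointwise estimate via randomization and type.} Set $y_k^N(\xi)=\sum_{j\in J_k^N(\xi)}\varepsilon_jE_jf(\xi)\theta_j(\xi)$. The randomization identity combined with type $s$ of $E$ (note $s\leq r$) and Khintchine--Kahane gives
\begin{equation*}
\E\Big\|\sum_{j\geq N}\varepsilon_jE_jf(\xi)\theta_j(\xi)\Big\|^p=\E\E'\Big\|\sum_k\varepsilon'_ky_k^N(\xi)\Big\|^p\lesssim\E\Big(\sum_k\|y_k^N(\xi)\|^s\Big)^{p/s}.
\end{equation*}
Minkowski's inequality in $L^{p/s}(\varepsilon)$ (valid since $p\geq s$), R-boundedness (producing a factor $2^{kp}$), and Kahane's contraction principle (extending each block sum from $J_k^N$ to the tail $\{j\geq\tau_{k-1}(\xi)\}$) then yield the $N$-independent pointwise bound
\begin{equation*}
\sup_{N\in\Z}\E\Big\|\sum_{j\geq N}\varepsilon_jE_jf(\xi)\theta_j(\xi)\Big\|^p\lesssim\Big(\sum_k2^{ks}S_{\tau_{k-1}(\xi)}(\xi)^{s/p}\Big)^{p/s},
\end{equation*}
where $S_m(\xi):=\E\|\sum_{j\geq m}\varepsilon_j\theta_j(\xi)\|^p$ and $S_{\infty}=0$.

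\textbf{Carleson and Lorentz recognition.} Integrate over $\xi$ and apply Minkowski in $L^{p/s}(\mu)$; this reduces everything to bounding $\int_\Omega S_{\tau_{k-1}}\,d\mu$. Decomposing along the level sets of $\tau_{k-1}$ and invoking the $p$-Carleson condition on the $\mathcal{F}_m$-measurable sets $\{\tau_{k-1}=m\}$ gives
\begin{equation*}
\int_\Omega S_{\tau_{k-1}}\,d\mu=\sum_m\int_{\{\tau_{k-1}=m\}}S_m\,d\mu\leq\|\theta\|_{\textup{Car}^p}^p\,\mu(\{M_Rf>2^{k-1}\}).
\end{equation*}
The dyadic identity $\sum_k2^{ks}\mu(\{M_Rf>2^{k-1}\})^{s/p}\eqsim\|M_Rf\|_{L^{p,s}}^s$ then delivers the first bound. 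For the second inequality, suppose $E\not\supset c_0$. Finiteness of $\int_\Omega\sup_N\E\|\cdot\|^p\,d\mu$ implies, via the triangle inequality, that $(E_jf(\xi)\theta_j(\xi))_{j\in\Z}$ has uniformly bounded partial Rademacher sums over finite index intervals; Proposition~\ref{prop:Kwapien} yields a.s.\ convergence of $\sum_{j\in\Z}\varepsilon_jE_jf(\xi)\theta_j(\xi)$, and Fatou's lemma promotes the first bound to the second.

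\textbf{Main obstacle.} The principal difficulty is to make the estimate of the second step \emph{uniform in $N$}, so that the supremum survives inside the spatial integral. This is precisely the role of the Kahane-contraction step, which replaces each block sum $y_k^N$ by a tail sum starting at the stopping time $\tau_{k-1}(\xi)$ and therefore independent of $N$. The second subtlety is the careful ordering of the two Minkowski-in-$L^{p/s}$ applications around the type inequality; using type $s=\min\{p,r\}$ rather than $r$ is what makes both Minkowski steps legal, and is ultimately responsible for the appearance of the Lorentz index $s$ on the right-hand side.
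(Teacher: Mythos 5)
Your argument is in substance the paper's own stopping-time proof: the same blocks on which the R-bound of $\{E_if(\xi)\}$ is dyadically controlled, the same randomization-plus-type-$s$ step, Minkowski in $L^{p/s}$, the contraction principle to pass from blocks to tails, the Carleson condition on the level sets of the stopping time, the dyadic recognition of the $L^{p,s}$ norm, and Kwapie\'n's proposition for the bi-infinite sum. Two remarks are in order, one of which is a real loose end in your write-up.

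First, the loose end. You define $\tau_k(\xi)=\inf\{j\in\Z: \mathcal{R}(E_if(\xi):i\le j)>2^k\}$ over all of $\Z$, so $\tau_{k-1}(\xi)=-\infty$ can occur on a set of positive measure (e.g.\ when $\bigcap_j\mathcal{F}_j$ is nontrivial and $E_jf$ does not vanish as $j\to-\infty$). On that set your majorant $S_{\tau_{k-1}(\xi)}(\xi)=\E\|\sum_{j\in\Z}\varepsilon_j\theta_j(\xi)\|^p$ is a bi-infinite tail which the $\mathrm{Car}^p$ condition does not control as stated (it only concerns tails $j\ge m$ with $m$ finite and sets in $\mathcal{F}_m$), and your identity $\int_\Omega S_{\tau_{k-1}}\,d\mu=\sum_m\int_{\{\tau_{k-1}=m\}}S_m\,d\mu$ silently includes the value $m=-\infty$, to which the Carleson bound does not directly apply. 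This is repairable: since $\{\tau_{k-1}=-\infty\}\in\mathcal{F}_m$ for every finite $m$, apply the Carleson condition at each finite level and let $m\to-\infty$ by monotone convergence. Alternatively --- and this is what the paper does --- define the stopping times only for $j\ge N$, so that every level set $\{\tau_k=m\}$ has $m\ge N$ finite; the apparent loss of $N$-independence is harmless because $\mu(\{\tau_k<\infty\})=\mu(\{M_Rf>2^k\})$ does not depend on $N$.

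Second, the issue you single out as the ``main obstacle'' (uniformity in $N$, solved by your block-to-tail contraction) is handled in the paper by a one-line reduction before any stopping times appear: by the contraction principle the truncated moments $\E\|\sum_{j\ge N}\varepsilon_jE_jf(\xi)\theta_j(\xi)\|^p$ are nondecreasing as $N\to-\infty$, so the supremum inside the integral is a monotone limit and Fatou (monotone convergence) lets one fix $N$ and prove a bound independent of it, with the supremum outside the integral. So your extra contraction step buys a genuinely pointwise $N$-uniform estimate, but it is not needed, and it is precisely what forces your stopping times to be untruncated and creates the $m=-\infty$ issue above. With either repair, your proof is correct and coincides with the paper's.
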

  
\begin{proof}
By the contraction principle, the supremum $\sup_{N\in\Z}$ may be replaced by $\lim_{N\to-\infty}$, and then by Fatou's lemma it suffices to prove a similar statement with the supremum outside the integral. So we may consider $N\in\Z$ fixed, and then prove the assertion with a bound independent of $N$.
  
Let $f\in L^{p,s}(\mathcal{X})$.
    We may assume with no loss of generality that $M_Rf < \infty$ $\mu$-almost everywhere
    so that $(E_jf(\xi)\theta_j(\xi))_{j\geq N}$ is in $\text{Rad}(E)$ for $\mu$-almost every $\xi\in\Omega$.
    In order to break the sum into suitable pieces we define the stopping times
    \begin{equation*}
      \tau_k(\xi ) = \min \Big\{ j\geq N : \mathcal{R} \Big(  E_if(\xi ) : i\leq j \Big) > 2^k \Big\} , \quad
      k\in\Z , \quad \xi\in\Omega .
    \end{equation*}
%    We may well have $\tau_k = -\infty$ in a set of positive measure, but this will not pose a problem
%    since $\theta$ is only finitely non-zero. 
    Since $M_Rf$ is finite $\mu$-almost everywhere, we have for $\mu$-almost every $\xi\in\Omega$ that
    $\tau_k(\xi ) = \infty$ when $k$ is big enough. On the other hand, 
    $\tau_k(\xi )$ may for some $\xi\in\Omega$ tend to some
    $\tau_{-\infty} (\xi ) > N$ as $k\to -\infty$, but then
    \begin{equation*}
      \sup_{j < \tau_{-\infty}(\xi )} \mathcal{R} \Big( E_if(\xi ) : i\leq j \Big) \leq 2^k
    \end{equation*}
    for all $k\in\Z$, which is possible only if $E_jf(\xi ) = 0$ whenever $j < \tau_{-\infty}(\xi )$.
  
    The set of indices $j\geq\tau_{-\infty}(\xi)$ can now be written as a union of finitely many disjoint sets
    \begin{equation*}
      J_k(\xi) = \{ j\in\Z : \tau_k (\xi) \leq j < \tau_{k+1} (\xi) \}
    \end{equation*} 
    at each point $\xi\in\Omega$.
    Using randomization and type $s = \min \{ p,r \}$ of $E$ we get
    \begin{align*}
      \E \Big\| \sum_{j\geq N} \varepsilon_j E_jf(\xi ) \theta_j(\xi ) \Big\|^p
      &= \E \E' \Big\| \sum_{k\in\Z} \varepsilon_k' \sum_{j\in J_k(\xi)}%{\tau_k (\xi ) \leq j < \tau_{k+1} (\xi )}
        \varepsilon_j E_jf(\xi ) \theta_j(\xi ) \Big\|^p \\
      &\lesssim \Big( \sum_{k\in\Z} \E \Big\| \sum_{j\in J_k(\xi)}%{\tau_k (\xi ) \leq j < \tau_{k+1} (\xi )} 
        \varepsilon_j E_jf(\xi ) \theta_j(\xi ) \Big\|^s \Big)^{p/s} ,
    \end{align*}
    where
    \begin{equation*}
      \E \Big\| \sum_{j\in J_k(\xi)}%{\tau_k (\xi ) \leq j < \tau_{k+1} (\xi )} 
        \varepsilon_j E_jf(\xi ) \theta_j(\xi ) \Big\|^s
      \leq 2^{(k+1)s} \E \Big\| \sum_{j\in J_k(\xi)}%{\tau_k (\xi ) \leq j < \tau_{k+1} (\xi )} 
        \varepsilon_j \theta_j(\xi ) \Big\|^s 
    \end{equation*}
    by the definition of the stopping times $\tau_k$. Since $s\leq p$, we may use the triangle inequality in $L^{p/s}$ to get
    \begin{align*}
      \Big( \int_{\Omega} \E \Big\| \sum_{j\geq N} \varepsilon_j E_jf(\xi) \theta_j (\xi) \Big\|^p \D\mu (\xi) \Big)^{s/p}
      &\lesssim \Big( \int_{\Omega} \Big( \sum_{k\in\Z} 2^{(k+1)s} 
         \E \Big\| \sum_{j\in J_k(\xi)}%{\tau_k (\xi ) \leq j < \tau_{k+1} (\xi )} 
        \varepsilon_j \theta_j(\xi ) \Big\|^s  \Big)^{p/s} \D\mu (\xi) \Big)^{s/p}\\
      &\leq \sum_{k\in\Z} 2^{(k+1)s} \Big( \int_{\Omega} 
      \Big( \E \Big\| \sum_{j\in J_k(\xi)}%{\tau_k (\xi ) \leq j < \tau_{k+1} (\xi )} 
        \varepsilon_j \theta_j(\xi ) \Big\|^s  \Big)^{p/s} \D\mu (\xi) \Big)^{s/p},
    \end{align*}
    where
    \begin{equation*}
      \Big( \E \Big\| \sum_{j\in J_k(\xi)}%{\tau_k (\xi ) \leq j < \tau_{k+1} (\xi )} 
        \varepsilon_j \theta_j(\xi ) \Big\|^s  \Big)^{p/s} \eqsim
      \E \Big\| \sum_{j\in J_k(\xi)}%{\tau_k (\xi ) \leq j < \tau_{k+1} (\xi )} 
        \varepsilon_j \theta_j(\xi ) \Big\|^p
    \end{equation*}
    by the Khintchine-Kahane inequality.
    
    We write $A_m = \{ \xi\in\Omega : \tau_k(\xi ) = m \}$ for a fixed $k$ to split the space as
    $\Omega = \bigcup_{m\geq N} A_m$, where the value $m=\infty$ is a priori included in the union. Note that $A_m\in\mathcal{F}_m$
    for each integer $m\geq N$, and for $m=\infty$ the sum over $j\geq m$ is empty. Hence
    \begin{equation*}
      \int_{\Omega} \E \Big\| \sum_{j\in J_k(\xi)}%{\tau_k (\xi ) \leq j < \tau_{k+1} (\xi )} 
        \varepsilon_j \theta_j(\xi ) \Big\|^p \D\mu (\xi )
      \leq \sum_{m\geq N} 
      \int_{A_m} \E \Big\| \sum_{j\geq m} \varepsilon_j \theta_j (\xi ) \Big\|^p \D\mu (\xi ),
    \end{equation*}
    where the summation can be restricted to finite values of $m$, as usual.
    Using the $p$-Carleson condition for sets $A_m$ % and that
%    $m=-\infty$ can be omitted since $\theta$ is only finitely non-zero) 
    we obtain
    \begin{equation*}
      \sum_{m\geq N} \int_{A_m} \E \Big\| \sum_{j\geq m} \varepsilon_j \theta_j (\xi ) \Big\|^p \D\mu (\xi )
      \leq \| \theta \|_{\text{Car}^p}^p \sum_{m\geq N} \mu (A_m)
      = \| \theta \|_{\text{Car}^p}^p \mu ( \{ \xi\in\Omega : \tau_k(\xi ) < \infty \} ) .
    \end{equation*}
    Observe that $\tau_k(\xi ) < \infty$ exactly when 
    $\mathcal{R} \Big( E_if(\xi ) : i\leq j \Big) > 2^k$ for some integer $j$, i.e. when
    $M_Rf(\xi ) > 2^k$. In conclusion,   
    \begin{align*}
      \Big( \int_{\Omega} \E \Big\| 
        \sum_{j\geq N} \varepsilon_j E_jf (\xi ) \theta_j (\xi ) \Big\|^p \D\mu (\xi ) \Big)^{s/p}
      &\leq \sum_{k\in\Z} 2^{(k+1)s} 
            \Big( \int_{\Omega} \E \Big\| \sum_{j\in J_k(\xi)}%{\tau_k(\xi ) \leq j < \tau_{k+1}(\xi )} 
            \varepsilon_j \theta_j (\xi ) \Big\|^p \D\mu (\xi ) \Big)^{s/p} \\
      &\leq \| \theta \|_{\text{Car}^p}^s 
        \sum_{k\in\Z} 2^{(k+1)s} \mu ( \{ \xi\in\Omega : M_Rf(\xi ) > 2^k \} )^{s/p} \\
      &\eqsim \| \theta \|_{\text{Car}^p}^s \| M_Rf \|_{L^{p,s}}^s .
    \end{align*}
This completes the proof of the case involving the truncated sums over $j\geq N$.

Let us then assume, in addition, that $E$ does not contain a copy of $c_0$. By the first part of the proof, we already know that
\begin{equation*}
  \sup_{N\in\Z}\E \Big\| 
        \sum_{j\geq N} \varepsilon_j E_jf (\xi ) \theta_j (\xi ) \Big\|^p<\infty\qquad\text{for a.e. }\xi\in\Omega
\end{equation*}
By Proposition~\ref{prop:Kwapien}, this implies that $(E_jf(\xi)\theta_j(\xi))_{j\in\Z}$ belongs to $\textup{Rad}(E)$ for all the $\xi$, and we may hence pass to the limit $N\to-\infty$ to obtain the second assertion.
 \end{proof}
 
%To every $p$-Carleson family $\theta = (\theta_j)_{j\in\Z}$ we associate the \emph{Carleson map} $\Theta$
%which assings to an $f\in L^p(\mathcal{X})$ a sequence of vectors in $E$ at each point $\xi\in\Omega$ by the formula
%%$\text{Rad}(E)$-valued function
%\begin{equation*}
%  \Theta f(\xi) = \Big( E_jf(\xi)\theta_j(\xi) \Big)_{j\in\Z} , \quad \xi\in\Omega .
%\end{equation*}

Let $1 \leq p,q < \infty$ and suppose that $\theta = (\theta_j)_{j\in\Z}$ is a $q$-Carleson family of $F$-valued functions.
For every $f\in L_{\textup{fin}}^1(\mathcal{X})$ we define
\begin{equation*}
  \Theta f(\xi) = \Big( E_jf(\xi)\theta_j(\xi) \Big)_{j\in\Z} , \quad \xi\in\Omega .
\end{equation*}
We ask if the linear operator $\Theta$ is bounded from $L^p(\mathcal{X})$ to $L^p(\text{Rad}(E))$ and further if
the \emph{$(q,p)$-Carleson map} (with respect to the given filtration on the given $\sigma$-finite measure space)
\begin{equation*}
  \text{Car}^q \to \mathcal{L}\Big( L^p(\mathcal{X}) , L^p(\text{Rad}(E)) \Big) : \quad \theta \mapsto \Theta
\end{equation*}
is well-defined and bounded.

%\begin{remark}
%  closed graph theorem
%\end{remark}

Now we come to the first main theorem.

\begin{theorem}
\label{mainthm1}
  Suppose that $\mathcal{X}\subset\mathcal{L}(F,E)$ is a Banach space %whose norm dominates the operator norm and
  and let $1 < p < q < \infty$. The following conditions are equivalent:
  \begin{enumerate}
%    \item The Carleson map of any $q$-Carleson family
%          is bounded from $L^p(\mathcal{X})$ to $L^p(\text{Rad}(E))$.
%    \item The Carleson map 
%          of any $q$-Carleson family
%          with respect to the filtration of dyadic intervals on $[0,1)$ is bounded
%          from $L^p(\mathcal{X})$ to $L^p(\text{Rad}(E))$.
    \item\label{qpCarGeneral} The $(q,p)$-Carleson map 
          with respect to any filtration on any $\sigma$-finite measure space is well-defined and bounded.
    \item\label{qpCarDyadic} The $(q,p)$-Carleson map 
          with respect to the filtration of dyadic intervals on $[0,1)$ is well-defined and bounded.
    \item\label{XhasRMF} $\mathcal{X}$ has RMF.
  \end{enumerate}
\end{theorem}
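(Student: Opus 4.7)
The strategy is to prove the cyclic chain $(1) \Rightarrow (2) \Rightarrow (3) \Rightarrow (1)$; the first of these is immediate, since Lebesgue measure on $[0,1)$ with the dyadic filtration is one of the configurations allowed in~(1).

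For $(3) \Rightarrow (1)$, the plan is to invoke Lemma~\ref{mainlemma}. Since $\mathcal{X}$ has RMF, so does its closed subspace $E \simeq \mathcal{L}(\K, E)$, and hence $E$ has some nontrivial type $r>1$. Choosing an auxiliary exponent $\tilde{p}_0 \in (1, \min(r,q))$ forces $s = \tilde{p}_0$ in the lemma, so the Lorentz norm on the right collapses to $L^{\tilde{p}_0}$ and the RMF bound on $M_R$ yields a bounded $\Theta \colon L^{\tilde{p}_0}(\mathcal{X}) \to L^{\tilde{p}_0}(\textup{Rad}(E))$. A second application at some $\tilde{p}_1 \in (p,q)$ with trivial type (so $s=1$), combined with the Lorentz refinements of the RMF bound from \cite{HMP} and a real-interpolation argument between the two endpoints, promotes this to the desired $\Theta \colon L^p(\mathcal{X}) \to L^p(\textup{Rad}(E))$. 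The universality over measure spaces and filtrations descends from the corresponding universality of RMF established in \cite{RMF}.

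The main content is $(2) \Rightarrow (3)$. Given $f \in L^p(\mathcal{X})$ on $[0,1)$ with the dyadic filtration, I would construct a $q$-Carleson family $\theta$ designed so that $\Theta f$ recovers $M_R f$. By the very definition of $M_R f(\xi)$ as an R-bound, for each $\xi$ one may select a finite index set $J(\xi) \subset \Z$ and vectors $(x_j(\xi))_{j \in J(\xi)}$ in $F$ with $\E\|\sum_{j\in J(\xi)} \varepsilon_j x_j(\xi)\|^p \leq 1$ and
\begin{equation*}
\E\Big\|\sum_{j \in J(\xi)} \varepsilon_j E_j f(\xi) x_j(\xi)\Big\|^p \geq \tfrac{1}{2} M_R f(\xi)^p.
\end{equation*}
Measurability of $\xi \mapsto (J(\xi), (x_j(\xi)))$ is secured by first restricting the defining supremum to a countable collection (indices $|j|,|J| \leq N$ and vectors drawn from a countable dense subset of the unit ball of $F^N$), selecting measurably on the level sets where each finite candidate achieves the largest value, and then passing to the limit $N \to \infty$ by monotone convergence. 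Setting $\theta_j(\xi) := x_j(\xi)\,\mathbbm{1}_{\{j \in J(\xi)\}}$, the contraction principle (to restrict the randomized sum to any tail $j \geq m$) and the Khintchine--Kahane inequality (to convert the $L^p$-normalization of the $x_j(\xi)$ to its $L^q$ counterpart) yield $\|\theta\|_{\textup{Car}^q} \lesssim 1$. Hypothesis~(2) then delivers $\|\Theta f\|_{L^p(\textup{Rad}(E))} \lesssim \|f\|_{L^p(\mathcal{X})}$, while by construction $\|\Theta f\|_{L^p(\textup{Rad}(E))}^p \geq \tfrac{1}{2}\|M_R f\|_{L^p}^p$, whence $\mathcal{X}$ has RMF.

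The principal obstacle is the measurable selection underlying the construction of $\theta$ in $(2) \Rightarrow (3)$, which requires the standard discretization of the R-bound to a countable supremum. Once that is handled, the remaining verifications reduce to routine applications of the contraction principle and Khintchine--Kahane. The interpolation step in $(3) \Rightarrow (1)$ is more delicate but follows the pattern of \cite{HMP}.
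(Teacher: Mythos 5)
Your overall strategy is the same as the paper's: $(1)\Rightarrow(2)$ trivially, $(3)\Rightarrow(1)$ via Lemma~\ref{mainlemma} together with the Lorentz-space bounds for $M_R$ and real interpolation, and $(2)\Rightarrow(3)$ by manufacturing near-optimizing $q$-Carleson families so that the assumed embedding applied to $f$ recovers $\|M_Rf\|_{L^p}$. The genuine weak point is exactly the step you flag as the principal obstacle, and your proposed fix does not quite work as stated. First, a countable dense subset of the unit ball of $F^N$ need not exist: $F$ is not assumed separable, and strong measurability of $f$ only gives (essential) separability of its range in $\mathcal{X}$, not of the space $F$ on which the operators $E_jf(\xi)$ act. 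Second, ``selecting measurably on the level sets where each finite candidate achieves the largest value'' is problematic, since the supremum defining the R-bound need not be attained. The paper avoids any selection argument: for fixed $N$, the functions $E_0f,\dots,E_Nf$ are constant on each of the $2^N$ intervals of $\mathcal{D}_N$, so one only needs finitely many tuples of vectors; for each interval one picks a sequence $x^{(k)}=(x_j^{(k)})_{j=0}^N$, normalized in the randomized norm, whose action on $(E_jf(\xi))_{j=0}^N$ approaches $\mathcal{R}_p(E_jf(\xi):0\le j\le N)^p$, and the resulting families $\theta^{(k)}$ are piecewise constant (hence automatically strongly measurable) with $\|\theta^{(k)}\|_{\textup{Car}^q}\lesssim 1$ by the contraction principle and Khintchine--Kahane, exactly as you indicate; one then concludes with $\liminf_{k}$ (Fatou) and lets $N\to\infty$ by monotone convergence, using comparability of $\mathcal{R}_p$ and $\mathcal{R}_2$. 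Since you already restrict to the dyadic filtration and finitely many levels, replacing your countable-density discretization by this local-constancy observation repairs the argument and makes it coincide with the paper's.

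Two smaller points in $(3)\Rightarrow(1)$, which is otherwise in substance the paper's proof. You must take the lower auxiliary exponent strictly below $p$ (e.g. $\tilde p_0\in(1,\min\{r,p\})$, not merely $\tilde p_0<\min\{r,q\}$), or the interpolation segment $(\tilde p_0,\tilde p_1)$ need not contain $p$. Also, invoking nontrivial type $r$ at the lower endpoint is unnecessary: the paper applies Lemma~\ref{mainlemma} with trivial type ($s=1$) at both exponents $p-\varepsilon$ and $q$, using the Lorentz bounds $M_R:L^{t,1}(\mathcal{X})\to L^{t,1}$ furnished by RMF and interpolation of the linearized maximal operator, and the $\textup{Car}^q$ constant dominates the lower Carleson constants since $p-\varepsilon<q$. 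What the nontrivial type coming from RMF is really needed for is that $E$ contains no copy of $c_0$, so that the lemma applies in its stronger form and $\Theta f(\xi)$ genuinely lies in $\textup{Rad}(E)$, i.e.\ the Carleson map is well defined; you should make this point explicit rather than leave it implicit in the choice of $r$.
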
  

Note that $\eqref{XhasRMF}\Rightarrow\eqref{qpCarDyadic}$ was shown in \cite{HMP}. We will extend this argument to show the implication $\eqref{XhasRMF}\Rightarrow\eqref{qpCarGeneral}$; a similar claim was formulated without proof in \cite{HYTONENNONHOMTB}. This extension relies implicitly on the result of Kemppainen~\cite{RMF} that the RMF property with respect to the dyadic filtration already implies the corresponding property for arbitrary filtrations and measure spaces. The implication $\eqref{qpCarDyadic}\Rightarrow\eqref{XhasRMF}$ is completely new.
  
\begin{proof}
%    It is clear that the first condition implies the second.
     $\eqref{qpCarGeneral}\Rightarrow\eqref{qpCarDyadic}$: Clear

    $\eqref{qpCarDyadic}\Rightarrow\eqref{XhasRMF}$: Take any positive integer $N$ and let
    $f\in L^p([0,1) ; \mathcal{X})$. There exists for every $\xi\in [0,1)$ elements
    $x^{(k)} = (x_j^{(k)})_{j=0}^N$, $k\in\Z_+$, of $\text{Rad}(F)$ such that
    \begin{equation*}
      \E \Big\| \sum_{j=0}^N \varepsilon_j x_j^{(k)} \Big\|^p \leq 1
    \end{equation*}
    and
    \begin{equation*}
      \E \Big\| \sum_{j=0}^N \varepsilon_j E_j f(\xi ) x_j^{(k)} \Big\|^p \to
      \mathcal{R}_p \Big( E_j f(\xi ) : 0\leq j \leq N \Big) ^p
    \end{equation*}
    as $k$ tends to infinity. Since each $E_jf$ is constant on intervals of $\mathcal{D}_N$, we only need to choose
    $2^N$ different $(x^{(k)})_{k=1}^{\infty}$'s, one for each interval. Thus we may define
    $\theta_j^{(k)}(\xi ) = x_j^{(k)}$, where $x^{(k)}$ corresponds to the interval containing $\xi$.
    It is immediate that each $\theta_j^{(k)}(\xi )$ is strongly measurable and that each
    $\theta^{(k)} = (\theta_j^{(k)})_{j=0}^N$ is a $q$-Carleson family with $\| \theta^{(k)} \|_{\text{Car}^q} \leq 1$.
    Thus
    \begin{equation*}
      \int_0^1 \mathcal{R}_p \Big( E_j f(\xi) : 0\leq j \leq N \Big)^p \D\xi
      \leq \liminf_{k\to\infty} \int_0^1 \E \Big\| \sum_{j=0}^N
      \varepsilon_j E_j f (\xi ) \theta_j^{(k)}(\xi ) \Big\|^p \D\xi
      \lesssim \| f \|_{L^p([0,1) ; \mathcal{X})}^p,
    \end{equation*}
    and consequently $M_R$ is bounded (remember that $\mathcal{R}_p$ and $\mathcal{R}_2$ -bounds are comparable).

%    To see that the third condition implies the first, 
    $\eqref{XhasRMF}\Rightarrow\eqref{qpCarGeneral}$: Suppose that $\mathcal{X}$ has RMF and let
    $\theta$ be a $q$-Carleson family with respect to any filtration on any $\sigma$-finite measure space.
    The RMF property implies in particular that $E$ cannot contain a copy of $c_0$, and hence Lemma~\ref{mainlemma} is applicable in its stronger form.
    Combining it with the fact that the Rademacher maximal operator maps $L^{q,1}(\mathcal{X})$
    boundedly to $L^{q,1}$ and $E$ has type 1 (trivially), Lemma \ref{mainlemma} shows that $\Theta f$ is well-defined for $f\in L^{q,1}(\mathcal{X})$ and
    \begin{equation*}
      \| \Theta f \|_{L^q(\text{Rad}(E))} \lesssim \| \theta \|_{\text{Car}^q} \| M_Rf \|_{L^{q,1}}
      \lesssim \| \theta \|_{\text{Car}^q} \| f \|_{L^{q,1}(\mathcal{X})} .
    \end{equation*}
    On the other hand, if $0 < \varepsilon < p - 1$, then $\theta$ is also a $(p - \varepsilon)$-Carleson family
    and a similar application of Lemma \ref{mainlemma} gives
    \begin{equation*}
      \| \Theta f \|_{L^{p-\varepsilon}(\text{Rad}(E))} 
      \lesssim \| \theta \|_{\text{Car}^{p-\varepsilon}} \| M_Rf \|_{L^{p-\varepsilon , 1}}
      \lesssim \| \theta \|_{\text{Car}^q} \| f \|_{L^{p-\varepsilon , 1}(\mathcal{X})} .
    \end{equation*}
    Hence $\Theta$ is bounded both from $L^{q,1}(\mathcal{X})$ to $L^q(\text{Rad}(E))$ and
    from $L^{p-\varepsilon , 1}(\mathcal{X})$ to $L^{p-\varepsilon}(\text{Rad}(E))$, which means that we may interpolate
    to get boundedness from $L^p(\mathcal{X})$ to $L^p(\text{Rad}(E))$. The $(q,p)$-Carleson map is thus
    well-defined and bounded.
  \end{proof}

\begin{theorem}
\label{mainthm2}
  Suppose that $\mathcal{X}\subset\mathcal{L}(F,E)$ is a Banach space %whose norm dominates the operator norm and
  and let $1 < p < \infty$. The following conditions are equivalent:
  \begin{enumerate}
%    \item The Carleson map of any $p$-Carleson family is bounded from $L^p(\mathcal{X})$ to $L^p(\text{Rad}(E))$.
%    \item The Carleson map of any $p$-Carleson family 
%          with respect to the filtration of dyadic intervals on $[0,1)$ is bounded
%          from $L^p(\mathcal{X})$ to $L^p(\text{Rad}(E))$.
    \item\label{ppCarGeneral} The $(p,p)$-Carleson map 
          with respect to any filtration on any $\sigma$-finite measure space is well-defined and bounded.
    \item\label{ppCarDyadic} The $(p,p)$-Carleson map 
          with respect to the filtration of dyadic intervals on $[0,1)$ is well-defined and bounded.
    \item\label{XhasRMFandType} $\mathcal{X}$ has RMF and $E$ has type $p$.
  \end{enumerate}
\end{theorem}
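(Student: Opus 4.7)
The plan mirrors the proof of Theorem~\ref{mainthm1}, with Lemma~\ref{mainlemma} now applied at the matching exponent. The implication $(1)\Rightarrow(2)$ is immediate. For $(3)\Rightarrow(1)$, the hypothesis that $\mathcal{X}$ has RMF forces $E$ to have nontrivial type, hence to contain no copy of $c_0$, so the strong form of Lemma~\ref{mainlemma} applies. Taking $r=p$ (type of $E$) gives $s=\min\{p,r\}=p$ and $L^{p,s}=L^{p,p}\eqsim L^p$, whence
\begin{equation*}
  \|\Theta f\|_{L^p(\textup{Rad}(E))} \lesssim \|\theta\|_{\textup{Car}^p} \|M_R f\|_{L^p} \lesssim \|\theta\|_{\textup{Car}^p} \|f\|_{L^p(\mathcal{X})},
\end{equation*}
where the last step uses RMF.

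For $(2)\Rightarrow(3)$ there are two things to check. The RMF of $\mathcal{X}$ is automatic: since $\|\theta\|_{\textup{Car}^p} \leq \|\theta\|_{\textup{Car}^q}$ for $p\leq q$, boundedness of the $(p,p)$-map implies boundedness of the $(q,p)$-map for every $q>p$, and Theorem~\ref{mainthm1} then delivers RMF.

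The substantial new content is the necessity of type $p$ for $E$. The plan is to construct, for arbitrary vectors $x_1,\ldots,x_N\in E$, a pair $(f,\theta)$ on $[0,1)$ with the dyadic filtration such that
\begin{equation*}
  \|f\|_{L^p(\mathcal{X})} \lesssim \Big(\sum_{j=1}^N \|x_j\|^p\Big)^{1/p}, \quad \|\theta\|_{\textup{Car}^p} \lesssim 1, \quad \|\Theta f\|_{L^p(\textup{Rad}(E))} \gtrsim \Big(\E\Big\|\sum_{j=1}^N \varepsilon_j x_j\Big\|^p\Big)^{1/p}.
\end{equation*}
The boundedness of the $(p,p)$-map applied to this pair then yields the type-$p$ inequality with a uniform constant. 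The values of $f$ lie in the isometric copy of $E$ inside $\mathcal{X}$ via $e\mapsto f^*\otimes e$ for a fixed unit $f^*\in F^*$, and $\theta$ may be taken of the form $\theta_j(\xi)=\lambda_j(\xi)y$ for a unit $y\in F$ with $f^*(y)=1$, so that the construction reduces to choosing scalar weights $\lambda_j$ on $[0,1)$.

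The hard part is this construction. The tension is that concentrating the $x_j$'s on a small dyadic set shrinks $\|\Theta f\|_{L^p(\textup{Rad}(E))}$ too severely, while spreading them widely inflates either $\|f\|_{L^p(\mathcal{X})}$ or $\|\theta\|_{\textup{Car}^p}$. I would place the $x_j$'s at $N$ distinct dyadic scales in a self-similar arrangement, designing $f$ so that each $E_j f$ isolates the contribution of $x_j$ on a union of level-$j$ intervals, and taking $\lambda_j$ as the indicator of precisely those intervals; the martingale-like disjointness of the activated sets across scales should keep $\|\theta\|_{\textup{Car}^p}$ bounded independently of $N$, while the Rademacher averaging in $\textup{Rad}(E)$ reassembles $\sum_{j=1}^N \varepsilon_j x_j$ and thereby provides the required lower bound on $\|\Theta f\|_{L^p(\textup{Rad}(E))}$.
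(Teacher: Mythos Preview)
Your treatment of $(1)\Rightarrow(2)$, $(3)\Rightarrow(1)$, and the RMF half of $(2)\Rightarrow(3)$ is correct and matches the paper; the reduction of the type-$p$ question to scalar multipliers via $e\mapsto f^*\otimes e$ and $\theta_j=\lambda_j y$ is also fine. The gap is in the construction you sketch. You propose indicator weights $\lambda_j$ with ``martingale-like disjointness of the activated sets across scales'' to control $\|\theta\|_{\textup{Car}^p}$, while hoping the Rademacher structure reassembles $\sum_j\varepsilon_j x_j$. These aims are incompatible: if the supports of the $\lambda_j$ are disjoint in $j$, then at each $\xi$ at most one term of $\Theta f(\xi)$ survives, and $\|\Theta f\|_{L^p(\textup{Rad}(E))}^p$ collapses to $\sum_j\int|\lambda_j|^p\|E_jf\|^p$, which can only recover $\sum_j\|x_j\|^p$, never $\E\|\sum_j\varepsilon_j x_j\|^p$. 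The Rademacher sum can be rebuilt only on the common support of \emph{all} the $\theta_j$. And if instead you try nested indicators such as $\lambda_j=1_{[0,2^{-j})}$, the Carleson bound survives, but arranging $E_jf$ to equal a fixed multiple of $x_j$ on the intersection $[0,2^{-N})$ forces $\|f\|_{L^p}^p$ to blow up like $2^N$.

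The paper's construction resolves the tension by using non-indicator weights, all concentrated on a single interval: $\theta_j=2^{(N-j-1)/p}\,1_{[0,2^{-N})}$ for $0\le j\le N-1$, together with a telescoping definition of $f$ on the rings $[2^{-j},2^{-j+1})$ so that $E_{j-1}f=2^{j/p}x_j$ on $[0,2^{-j+1})$. The two exponential factors cancel, giving $\theta_{j-1}E_{j-1}f=2^{N/p}x_j$ on $[0,2^{-N})$ and hence $\|\Theta f\|_{L^p(\textup{Rad}(E))}^p=\E\|\sum_j\varepsilon_j x_j\|^p$ exactly; the Carleson bound comes from Khintchine's inequality applied to the geometrically decaying scalars $2^{(N-j-1)/p}$; and $\|f\|_{L^p}^p\eqsim\sum_j 2^{-j}\|2^{j/p}x_j\|^p=\sum_j\|x_j\|^p$. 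The idea missing from your sketch is precisely these $j$-dependent weights in $\theta_j$ (paired with the matching weights in $E_jf$), which is what allows all three estimates to hold simultaneously.
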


Again, $\eqref{XhasRMFandType}\Rightarrow\eqref{ppCarDyadic}$ was shown in \cite{HMP}, and we extend this to $\eqref{XhasRMFandType}\Rightarrow\eqref{ppCarGeneral}$. The proof of $\eqref{ppCarDyadic}\Rightarrow\eqref{XhasRMFandType}$ was inspired by the counterexample by Michael Lacey, which demonstrated that the $(4,4)$-Carleson map with respect to the dyadic intervals is unbounded even for $\mathcal{X}=E=F=\R$.

  \begin{proof}
%    That the first condition implies the second is again trivial.
    $\eqref{ppCarGeneral}\Rightarrow\eqref{ppCarDyadic}$: Clear.

    $\eqref{ppCarDyadic}\Rightarrow\eqref{XhasRMFandType}$:
    The proof that $\mathcal{X}$ has RMF is identical to the corresponding
    argument in the proof of Theorem \ref{mainthm1}. For the claim on type $p$ of $E$, recall that the space $E\simeq\mathcal{L}(\K,E)$ itself has RMF under the assumptions, so we may take $F=\K$.
%In this case the $F$-valued functions $\theta_j$ can be identified with scalar functions $\xi \mapsto \langle \theta_j(\xi) , x^* \rangle$. 
    Thus we assume that for any (scalar) $p$-Carleson family $\theta = (\theta_j)_{j=0}^{\infty}$ we have
    \begin{equation*}
%    \label{eq}
      \Big( \int_0^1 \E \Big\| \sum_{j=0}^{\infty} \varepsilon_j \theta_j(\xi ) E_jf(\xi) \Big\| ^p \D\xi \Big)^{1/p}
      \lesssim \| \theta \|_{\text{Car}^p} \| f \|_{L^p(E)}
    \end{equation*}
    whenever $f$ is in $L^p(E)$.
    
    Suppose we are given $x_1,\ldots , x_N$ in $E$. We aim to construct a function $f\in L^p(E)$
    and a $p$-Carleson family $\theta$ for which
    \begin{equation*}
      \int_0^1 \E \Big\| \sum_{j=0}^{\infty} \varepsilon_j \theta_j(\xi) E_jf(\xi) \Big\|^p \D\xi
      = \E \Big\| \sum_{j=1}^N \varepsilon_j x_j \Big\|^p
      \quad \text{and} \quad \| f \|_{L^p(E)}^p \lesssim \sum_{j=1}^N \| x_j \|^p .
    \end{equation*}
%    and
%    \begin{equation*}
%      \| f \|_{L^p(E)} \lesssim \sum_{j=1}^N \| x_j \|^p .
%    \end{equation*}
    Further, an upper bound for $\| \theta \|_{\text{Car}^p}$ must not depend on 
    $N$ (nor on the vectors $x_1, \ldots , x_N$), from which it will follow by our
    assumption on boundedness of the $(p,p)$-Carleson map, that $E$ has type $p$.
    
    To obtain vectors $y_1,\ldots ,y_N \in E$ 
    (which we choose later) as dyadic averages of a function $f$ we 
    define $f(\xi) = 0$ for $\xi\in [0,2^{-N})$ and
    \begin{equation*}
      f(\xi) = 2y_j - y_{j+1} \quad \text{for} \quad \xi\in [2^{-j}, 2^{-j+1}), \quad j=1,\ldots , N,
    \end{equation*}
    where $y_{N+1} = 0$. Now, for $\xi\in [0,2^{-j+1})$ with $j=1,\ldots , N$ we have
    \begin{equation*}
      E_{j-1}f(\xi) = 2^{j-1} \sum_{k=j}^N 2^{-k} (2y_k - y_{k+1}) 
      = 2^{j-1} \Big( \sum_{k=j}^N 2^{-k+1}y_k - \sum_{k=j+1}^{N+1} 2^{-k+1}y_k \Big) = y_j ,
    \end{equation*}
    while $E_jf(\xi) = 0$ for $\xi\in [0,2^{-j})$ with $j \geq N$.
    
    A suitable choice of $\theta$ guarantees that
    the averages $E_jf$ need to be considered only on the intervals
    of the form $[0,2^{-j})$. Indeed, we define
    \begin{equation*}
      \theta_j = 2^{(N-j-1)/p} 1_{[0,2^{-N})} , \quad j=0,1,\ldots , N-1 ,
    \end{equation*}
    so that whenever $0 \leq m \leq N-1$, we have
    \begin{align*}
      \int_0^{2^{-m}} \E \Big| \sum_{j=m}^{N-1} \varepsilon_j \theta_j (\xi ) \Big|^p \D\xi
      &= \int_0^{2^{-m}} \E \Big| \sum_{j=m}^{N-1} 
      \varepsilon_j 2^{(N-j-1)/p} 1_{[0,2^{-N})}(\xi) \Big|^p \D\xi \\
      &= 2^{-N} \E \Big| \sum_{j=m}^{N-1} \varepsilon_j 2^{(N-j-1)/p} \Big|^p \\
      &\lesssim 2^{-N} \Big( \sum_{j=m}^{N-1} 2^{2(N-j-1)/p} \Big)^{p/2} \\
      &= \Big( \sum_{j=m}^{N-1} 2^{-2(j+1)/p} \Big)^{p/2} \lesssim 2^{-m} ,
    \end{align*}
    where Khintchine's inequality
    (the scalar version of Khintchine-Kahane inequality) was used in the third step.
    Thus $\| \theta \|_{\text{Car}^p} \lesssim 1$ independently of $N$.
    
    The choice $y_j = 2^{j/p}x_j$ now gives
    \begin{align*}
      \int_0^1 \E \Big\| \sum_{j=0}^{\infty} \varepsilon_j \theta_j(\xi) E_jf(\xi) \Big\|^p \D\xi
      &= \int_0^1 \E \Big\| \sum_{j=1}^N 
      \varepsilon_j 2^{(N-j)/p} 1_{[0,2^{-N})} (\xi) 2^{j/p}x_j \Big\|^p \D\xi \\
      &= 2^{-N} \E \Big\| \sum_{j=1}^N \varepsilon_j 2^{N/p} x_j \Big\|^p \\
      &= \E \Big\| \sum_{j=1}^N \varepsilon_j x_j \Big\|^p ,
    \end{align*}
    and all that remains is to calculate the norm of $f$:
    \begin{equation*}
      \| f \|_{L^p(E)} = \Big( \sum_{j=1}^N 2^{-j} \| 2y_j - y_{j+1} \|^p \Big) ^{1/p}
      \lesssim \Big( \sum_{j=1}^N 2^{-j} \| y_j \|^p \Big)^{1/p} 
      = \Big( \sum_{j=1}^N \| x_j \|^p \Big)^{1/p} .
    \end{equation*}
    We have shown that
    \begin{equation*}
      \Big( \E \Big\| \sum_{j=1}^N \varepsilon_j x_j \Big\|^p \Big)^{1/p} 
      \lesssim \Big( \sum_{j=1}^N \| x_j \|^p \Big)^{1/p} ,
    \end{equation*}
    which by Khintchine-Kahane inequality guarantees that $E$ has type $p$.

     $\eqref{XhasRMFandType}\Rightarrow\eqref{ppCarGeneral}$:
    Suppose that $\mathcal{X}$ has RMF, $E$ has type $p$ and let $\theta$ be a 
    $p$-Carleson family with respect to any filtration on any $\sigma$-finite measure space. 
    Since the Rademacher maximal operator maps $L^p(\mathcal{X})$
    boundedly to $L^p$ and $E$ has type $p$ (and hence $E$ does not contain $c_0$), we can apply Lemma \ref{mainlemma} to obtain
    \begin{equation*}
      \| \Theta f \|_{L^p(\text{Rad}(E))}
      \lesssim \| \theta \|_{\text{Car}^p} \| M_Rf \|_{L^p}
      \lesssim \| \theta \|_{\text{Car}^p} \| f \|_{L^p(\mathcal{X})} .
    \end{equation*}
    The $(p,p)$-Carleson map is thus well-defined and bounded.
  \end{proof}

\begin{remark}
Note that $p$ cannot be greater than $2$ in Theorem \ref{mainthm2}. It is shown in Kemppainen \cite{RMF}
that every space with RMF has non-trivial type and so the conditions in Theorem \ref{mainthm2} always hold
for some $p > 1$.
\end{remark}

%Infinite $p$-Carleson families 
%$\theta = (\theta_j)_{j\in\Z}$ can also be considered with the additional assumption that
%$(\theta_j(\xi))_{j\in\Z}$ is in $\text{Rad}(F)$ for $\mu$-almost every $\xi\in\Omega$. The $p$-Carleson
%constant of any truncation is clearly at most $\| \theta \|_{\text{Car}^p}$ and thus Lemma \ref{mainlemma}

\section*{Acknowledgements}

Tuomas Hyt\"onen is supported by the Academy of Finland through projects 130166 ``$L^p$ Methods in Harmonic Analysis'' and 133264 ``Stochastic and Harmonic Analysis: Interactions and Applications''.
Mikko Kemppainen gratefully acknowledges the support from The Finnish National Graduate School in Mathematical Analysis and Its Applications.

\bibliographystyle{plain}
\bibliography{viitteet}

\end{document}